\def\cB{\mathcal{B}}
\def\cK{\mathcal{K}}
\def\cM{\mathcal{M}}
\def\cP{\mathcal{P}}
\def\cU{\mathcal{U}}
\def\smskip{\smallskip}
\def\texitem#1{\par\smskip\noindent\hangindent 25pt
               \hbox to 25pt {\hss #1 ~}\ignorespaces}
\def\norm#1{\|#1\|}
\newcommand{\BEAS}{\begin{eqnarray*}}
\newcommand{\EEAS}{\end{eqnarray*}}
\newcommand{\BEA}{\begin{eqnarray}}
\newcommand{\EEA}{\end{eqnarray}}
\newcommand{\BEQ}{\begin{eqnarray}}
\newcommand{\EEQ}{\end{eqnarray}}
\newcommand{\BIT}{\begin{itemize}}
\newcommand{\EIT}{\end{itemize}}
\newcommand{\BNUM}{\begin{enumerate}}
\newcommand{\ENUM}{\end{enumerate}}
\newcommand{\BA}{\begin{array}}
\newcommand{\EA}{\end{array}}
\newcommand{\reals}{\mathbb{R}}
\newcommand{\integers}{\mathbb{Z}}
\newif\ifpagenumbering
\newsavebox{\theorembox}
\newsavebox{\lemmabox}
\newsavebox{\remarkbox}
\newsavebox{\assbox}
\savebox{\theorembox}{\noindent\bf Theorem}
\savebox{\lemmabox}{\noindent\bf Lemma}
\savebox{\remarkbox}{\noindent\bf Remark}
\savebox{\assbox}{\noindent\bf Assumption}
\newtheorem{lemma}{Lemma}
\newtheorem{theorem}{\textbf{Theorem}}
\newcounter{subeqn} \renewcommand{\thesubeqn}{\theequation\alph{subeqn}}%
\newcommand{\subeqn}{%
	\refstepcounter{subeqn}
	\tag{\thesubeqn}
}
\def\norm#1{\|#1\|}
\def\norm#1{\|#1\|}
\newcommand{\seq}{\reals^{\mathbb{N}}}
\newcommand{\thickhline}{%
	\noalign {\ifnum 0=`}\fi \hrule height 1pt
	\futurelet \reserved@a \@xhline
}
\title{\LARGE \bf
Convex Chance Constrained Model Predictive Control
}
\author{Ashkan Jasour and Constantino Lagoa
\thanks{This work was partially supported by the National
	Science Foundation under grants CMMI-1400217, CNS-1329422, and ECCS-1201973}
\thanks{Ashkan Jasour is with the Department of Electrical Engineering, The Pennsylvania State University,
        {\tt\small jasour@psu.edu}}%
\thanks{Constantino Lagoa is with the Department of Electrical Engineering, The Pennsylvania State University,
       Sc, PA, USA
        {\tt\small lagoa@psu.edu}}%
}
\begin{document}

\maketitle
\thispagestyle{empty}
\pagestyle{empty}

\begin{abstract}

We consider the \textit{Chance Constrained Model Predictive Control} problem for polynomial systems subject to disturbances. In this problem, we aim at finding optimal control input for given disturbed dynamical system to minimize a given cost function subject to probabilistic constraints, over a finite horizon. The control laws provided have a predefined (low) risk of not reaching the desired target set. Building on the theory of measures and moments, a sequence of finite semidefinite programmings are provided, whose solution is shown to converge to the optimal solution of the original problem. Numerical examples are presented to illustrate the computational performance of the proposed approach.

\end{abstract}

\section{INTRODUCTION}

In this paper, we aim at solving chance constrained model predictive control (CCMPC) problems whose objective is to obtain finite-horizon optimal control of dynamical systems subject to probabilistic constraints. The control laws provided are designed to have precise bounds on the probability of achieving the desired objectives. More precisely, consider a polynomial dynamical system subject to external perturbation and assume that the probability distribution of the disturbances at each time is known. Then, given a desired set defined by polynomial inequalities and a polynomial cost function defined in terms of states and control input of the system, we aim at designing a controller to i) minimize the expected value of given cost function over the finite horizon and ii) reach the given desired set with high probability. For this purpose, at each sampling time we solve a convex optimization problem that minimizes the expected value of cost function subject to probabilistic constraints over the finite horizon.

Probabilistic formulations of model predictive control  such  as  the  one  above can be  used  in  different  areas  to  deal  with systems subject to disturbances. A few examples are probabilistic obstacle avoidance in motion planning of robotic systems  under  environment uncertainty \cite{Ref_Pro2}, risk  management problem \cite{Ref_Risk} and macroeconomic system control in  the  area of economy,  finance \cite{Ref_Econ1}, energy management problems \cite{Ref_Energy} and many  other areas that can be formulated as instances of CCMPC problems.  Although in some particular cases chance constraints problems are convex \cite{Ref_Conve}, in general, these problems are not convex. In this paper, using the theory of measures and moments we provide a semidefinite program whose  solution  converges  to  the  solution of the CCMPC problem mentioned above.

\subsection{Previous Work}

The MPC method is an optimal control based method, which a finite cost function is optimized at every sampling time under imposed constraints. At each sampling time, MPC needs to predict the future states of the system over the finite horizon using the dynamic of the system. To deal with uncertain parameters of the system and disturbance, several approaches have been proposed.

In (\cite{Ref_R1,Ref_R2}), robust MPC for linear and polynomial systems are proposed where robust constraints are employed. In this method, MPC is formulated considering the a bundle of trajectories for all possible realizations of the uncertainty.
The robust MPC methods are conservative, due to the requirement of robust feasibility for all disturbance realizations.

In (\cite{Ref_Ad1,Ref_Ad2,Ref_Ad3}), adaptive MPC are provided where neural networks are used to predict the future behavior of the system.  Using the online training algorithm, robustness against changes in the robot parameters is obtained.

In (\cite{Ref_Pro1,Ref_Pro2,Ref_Pro3}), to deal with model uncertainty the probabilistic constraints are used. In (\cite{Ref_Pro1,Ref_Pro2}) probabilistic constraints for linear systems are replaced with hard constrained assuming the Gaussian distribution for uncertainty. In \cite{Ref_Pro3}, a semialgabriac approximation of the probabilistic constraints are obtained. 

In this paper, take a different approach to deal with chance constraints. The proposed method is based on chance constrained optimization method that we have presented in (\cite{Ref_Chance, Ref_Chance2}). In this method, the relaxed optimization is provided in measure and moment space. One needs to search for the positive Borel measure on the given semialgebraic set, while simultaneously searching for an upper bound probability measure over a simple set containing the semialgebraic set and restricting the Borel measure.

\subsection{The Sequel}
The outline of the paper is as follows: in Section II, the notation adopted in the paper and preliminary results
on measure and moment theory are presented. In Section III, we precisely define the chance constrained MPC problem. In
Sections IV, we provide equivalent infinite dimensional convex problem one measure and in Section V we provide a semidefinite program on moments to solve obtained convex problem on measures. In Section VI,
some numerical results are presented to illustrate the numerical performance of the proposed approach, and finally,
conclusion is stated in Section VII.

\section{Notation and Preliminary Results}

Let $\mathbb{R}[x]$ be the ring of real polynomials in the variables $x \in \mathbb{R}^n$. Given $\cP\in\mathbb{R}[x]$, we will represent $\cP$ as $\sum_{\alpha\in\mathbb{N}^n} p_\alpha x^\alpha$ using the standard basis $\{x^\alpha\}_{\alpha\in \mathbb{N}^n}$ of $\mathbb{R}[x]$, and $\mathbf{p}=\{p_\alpha\}_{\alpha\in\mathbb{N}^n}$ denotes the polynomial coefficients. We assume that the elements of the coefficient vector $\mathbf{p}=\{p_\alpha\}_{\alpha\in\mathbb{N}^n}$ are sorted according to grevlex order on the corresponding monomial exponent $\alpha$.

Given $n$ and $d$ in $\mathbb{N}$, we define $S_{n,d} := \binom{d+n}{n}$ and $\mathbb{N} ^{\rm n}_d := \{\alpha \in \mathbb N^n : \norm{\alpha}_1 \leq d \}$. Let $\mathbb R_{\rm d}[x] \subset \mathbb R [x]$ denote the set of polynomials of degree at most $d\in \mathbb{N}$, which is indeed a vector space of dimension $S_{n,d}$. Similarly to $\cP\in\mathbb{R}[x]$, given $\cP\in\mathbb R_{\rm d}[x]$, $\mathbf{p}=\{p_\alpha\}_{\alpha\in\mathbb{N}^{\rm n}_d}$ is sorted such that $\mathbb{N} ^{\rm n}_d\ni\mathbf{0} = \alpha ^{(1)} <_g \ldots <_g \alpha ^{(S_{n,d})}$, where $S_{n,d}$ is the number of components in $\mathbf{p}$. 


Let $\seq$ denote the vector space of real sequences. Given $\mathbf{y}=\{y_\alpha\}_{\alpha\in\mathbb{N}^n}\subset\seq$, 
let $L_\mathbf{y}:\reals[x]\rightarrow\reals$ be a linear map defined as (\cite{Ref_Lass1, Ref_Lass2})
\begin{small}
	\begin{equation}
	\label{eq:lin_map}
	\cP \quad \mapsto \quad L_\mathbf{y}(\cP)=\sum_{\alpha\in\mathbb{N}^n}p_\alpha y_\alpha, \quad \hbox{where} \quad \cP(x)=\sum_{\alpha\in\mathbb{N}^n} p_\alpha x^\alpha
	\end{equation}
\end{small}
A sequence $\mathbf y = \{ y_ \alpha \}_{\alpha\in\mathbb{N}^n}\in\seq$
is said to have a \emph{representing measure}, if there exists a finite Borel measure $\mu$ on $\reals^n$ such that $y_{\alpha } = \int{x^{\alpha} d\mu }$ for every $\alpha \in \mathbb N ^n$, (\cite{Ref_Lass1, Ref_Lass2}). In this case, $ \mathbf y $ is called the moment sequence of the measure $\mu $.  

\textbf{Moment Matrix:} Given $r\geq 1$ and the sequence $\{y_\alpha\}_{\alpha\in\mathbb{N}^n}$, the moment matrix $M_r({\mathbf y})\in\reals^{S_{n,r}\times S_{n,r}}$, containing all the moments up to order $2r$, is a symmetric matrix and its $(i,j)$-th entry is defined as follows (\cite{Ref_Lass1, Ref_Lass2}):
\begin{equation}\label{momnt matirx def}
M_r ( \mathbf y )(i,j):= L_{\mathbf y}\left(x^{\alpha^{(i)}+\alpha^{(j)}}\right)=y_{\alpha^{(i)}+\alpha^{(j)}}\ \ \
\end{equation}
where  $1 \leq i,j \leq S_{n,r}$, $\mathbb{N} ^{\rm n}_r\ni\mathbf{0} = \alpha ^{(1)} <_g \ldots <_g \alpha ^{(S_{n,2r})}$ and $S_{n,2r}$ is the number of moments in $\mathbb{R}^n$ up to order $2r$. Let $\cB_r^T=\left[x^{\alpha^{(1)}},\ldots, x^{\alpha^{(S_{n,r})}}\right]^T$ denote the vector comprised of the monomial basis of $\mathbb R_{\rm r}[x]$. Note that the moment matrix can be written as $M_r({\mathbf y}) = L_\mathbf{y}\left(\cB_r \cB_r^T\right)$; here, the linear map $L_\mathbf{y}$ operates componentwise on the matrix of polynomials, $\cB_r \cB_r^T$. For instance, let $r=2$ and $n=2$; the moment matrix containing moments up to order $2r$ is given as
\begin{equation} \label{moment matrix exa}
M_2\left({\mathbf y}\right)=\left[ \begin{array}{c}

\begin{array}{ccc} y_{00} \ | & y_{10} & y_{01}| \end{array}
\begin{array}{ccc} y_{20} & y_{11} & y_{02} \end{array}
\\
\begin{array}{ccc} - & - & - \end{array}
\ \ \ \  \begin{array}{ccc} - & - & - \end{array}
\\

\begin{array}{ccc} y_{10}\ | & y_{20} & y_{11}| \end{array}
\ \begin{array}{ccc} y_{30} & y_{21} & y_{12} \end{array}
\\

\begin{array}{ccc} y_{01}\ | & y_{11} & y_{02}| \end{array}
\ \begin{array}{ccc} y_{21} & y_{12} & y_{03} \end{array}
\\

\begin{array}{ccc} - & - & - \end{array}
\ \ \ \ \  \begin{array}{ccc} - & - & - \end{array}
\\

\begin{array}{ccc} y_{20}\ | & y_{30} & y_{21}| \end{array}
\ \begin{array}{ccc} y_{40} & y_{31} & y_{22} \end{array}
\\

\begin{array}{ccc} y_{11}\ | & y_{21} & y_{12}| \end{array}
\ \begin{array}{ccc} y_{31} & y_{22} & y_{13} \end{array}
\\

\begin{array}{ccc}y_{02}\ | & y_{12} & y_{03}| \end{array}
\ \begin{array}{ccc} y_{22} & y_{13} & y_{04} \end{array}

\end{array}
\right]
\end{equation}
\textbf{Localizing Matrix:} Given a polynomial $\mathcal{P} \in \mathbb R [x]$, let $ \mathbf p = \{ p_{\gamma }\}_{\gamma\in\mathbb{N}^n}$ be its coefficient sequence in standard monomial basis, i.e., $\cP(x)=\sum_{\alpha\in\mathbb{N}^n} p_\alpha x^\alpha$, 
the $(i,j)$-th entry of the \emph{localizing matrix} $M_r(\mathbf{y};\mathcal{P})\in\reals^{S_{n,r}\times S_{n,r}}$ with respect to $\mathbf y $ and $\mathbf p$ is defined as follows (\cite{Ref_Lass1, Ref_Lass2}):
\begin{small}
	\begin{equation}\label{localization matrix def}
	M_r(\mathbf y;\mathcal{P})(i,j) := L_{\mathbf{y}}\left(\cP x^{\alpha^{(i)}+\alpha^{(j)}}\right)=\sum_{\gamma \in \mathbb N^n} p_{\gamma} y_{\gamma +\alpha^{(i)}+\alpha^{(j)}} \ \ 
	\end{equation}
\end{small}where, $1 \leq i,j \leq  S_{n,d}$. Equivalently, $M_r(\mathbf y,\mathcal{P}) = L_{\mathbf{y}}\left(\mathbf \cP \cB_r\cB_r^T\right)$, where $L_{\bf y}$ operates componentwise on $\cP \cB_r\cB_r^T$. For example, given $\mathbf{y}=\{y_\alpha\}_{\alpha\in\mathbb{N}^2}$ and the coefficient sequence $\mathbf{p}=\{p_\alpha\}_{\alpha\in\mathbb{N}^2}$ corresponding to polynomial $\cP$,
\begin{equation}
\mathcal{P}(x_1,x_2)= bx_1-cx^2_2,
\end{equation}
the localizing matrix for $r=1$ is formed as follows
\begin{equation}
M_1(\mathbf{y};\mathcal P)= \begin{small}
\left[ \begin{array}{ccc}
by_{10}-cy_{02} & by_{20}-cy_{12} & by_{11}-cy_{03} \\
by_{20}-cy_{12} & by_{30}-cy_{22} & by_{21}-cy_{13} \\
by_{11}-cy_{03} & by_{21}-cy_{13} & by_{12}-cy_{04} \end{array}
\right]
\end{small}
\end{equation}

Let $C\subset\reals^n$, $\Sigma(C)$ denotes the Borel $\sigma$-algebra over $C$. Given two measures $\mu_1$ and $\mu_2$ on a Borel $\sigma $-algebra $\Sigma $, the notation $\mu_1 \preccurlyeq \mu _2$ means  $\mu _1(S)\le \mu_2(S)$ for any set $S \in \Sigma $. Moreover, if $ \mu_1$ and $ \mu_2$ are both measures on Borel $\sigma$-algebras $\Sigma_1$ and $\Sigma_2$, respectively, then $\mu =\mu_1 \times \mu_2$ denotes the product measure satisfying $ \mu(S_1 \times S_2)=\mu_1 (S_1) \mu_2(S_2)$ for any measurable sets $S_1\in \Sigma_1$, $S_2 \in \Sigma_2$ \cite{Ref_Vol}. Also, let $\cM_+(\chi)$ be the space of finite nonnegative Borel measures $\mu$ such that $supp(\mu)\subset\chi$, where $supp(\mu)$ denotes the support of the measure $\mu$; i.e., the smallest closed set that contains all measurable sets with strictly positive $\mu$ measure, \cite{r_Supp}. Given two square symmetric matrices $A$ and $B$, the notation $A \succcurlyeq 0$ denotes that $A$ is positive semidefinite, and $A\succcurlyeq B$ stands for $A-B$ being positive semidefinite.

\textbf{Moment Condition:} The following lemmas give necessary and sufficient conditions for a moment sequence $\mathbf y$ to have a representing measure $\mu$; for details see (\cite{Ref_Lass1, Ref_Lass2}).
\begin{lemma}
	\label{lem1}
	Let $\mu$ be a finite nonnegative Borel measure on $\reals^n$ and $\mathbf{y}=\{y_\alpha\}_{\alpha\in\mathbb{N}^n}$ such that $y_\alpha=\int x^\alpha d\mu$ for all $\alpha\in\mathbb{N}^n$. Then $M_d(\mathbf y)\succcurlyeq 0$ for all $ d\in \mathbb N$.
\end{lemma}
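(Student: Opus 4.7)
The plan is to show positive semidefiniteness of $M_d(\mathbf{y})$ by testing against an arbitrary vector and rewriting the quadratic form as an integral of a nonnegative polynomial against $\mu$.

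First I would fix an arbitrary $d \in \mathbb{N}$ and an arbitrary vector $v \in \mathbb{R}^{S_{n,d}}$, and define the polynomial $q(x) = v^T \mathcal{B}_d(x) \in \mathbb{R}_d[x]$, i.e.\ the polynomial whose coefficient vector in the monomial basis is $v$. The goal is then to verify $v^T M_d(\mathbf{y}) v \geq 0$.

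The key computation uses the representation $M_d(\mathbf{y}) = L_\mathbf{y}(\mathcal{B}_d \mathcal{B}_d^T)$ already recorded after \eqref{momnt matirx def}. Since $L_\mathbf{y}$ acts componentwise and is linear, one gets
\begin{equation*}
v^T M_d(\mathbf{y}) v = v^T L_\mathbf{y}\bigl(\mathcal{B}_d \mathcal{B}_d^T\bigr) v = L_\mathbf{y}\bigl(v^T \mathcal{B}_d \mathcal{B}_d^T v\bigr) = L_\mathbf{y}(q^2).
\end{equation*}
Writing $q(x)^2 = \sum_{\alpha} c_\alpha x^\alpha$ and applying the definition \eqref{eq:lin_map} of $L_\mathbf{y}$ together with the hypothesis $y_\alpha = \int x^\alpha \, d\mu$, linearity of the Lebesgue integral gives
\begin{equation*}
L_\mathbf{y}(q^2) = \sum_{\alpha} c_\alpha y_\alpha = \sum_{\alpha} c_\alpha \int x^\alpha \, d\mu = \int q(x)^2 \, d\mu.
\end{equation*}
(The interchange of finite sum and integral is legitimate, and the integrals exist as finite numbers because the hypothesis guarantees all moments of $\mu$ are finite.)

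Finally, since $q(x)^2 \geq 0$ pointwise and $\mu$ is a nonnegative measure, $\int q^2 \, d\mu \geq 0$, so $v^T M_d(\mathbf{y}) v \geq 0$ for every $v$, which is exactly $M_d(\mathbf{y}) \succcurlyeq 0$. There is no real obstacle here; the only delicate point is justifying that $L_\mathbf{y}(q^2)$ equals $\int q^2 \, d\mu$, but this is immediate from the linearity of $L_\mathbf{y}$ and the moment identity $y_\alpha = \int x^\alpha d\mu$, which is given as part of the hypothesis.
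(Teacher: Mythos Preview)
Your argument is correct and is precisely the standard proof of this classical fact: express the quadratic form $v^T M_d(\mathbf{y}) v$ as $L_\mathbf{y}(q^2)$ via the identity $M_d(\mathbf{y}) = L_\mathbf{y}(\mathcal{B}_d\mathcal{B}_d^T)$, then identify $L_\mathbf{y}(q^2)$ with $\int q^2\,d\mu$ using the moment hypothesis, and conclude by nonnegativity of $\mu$. There is nothing to criticize.

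As for comparison with the paper: the paper does not actually supply a proof of Lemma~\ref{lem1}. It is stated as a preliminary result with a citation to Lasserre's work (\cite{Ref_Lass1, Ref_Lass2}) for details, so there is no ``paper's own proof'' to compare against. Your write-up is exactly the argument one finds in those references, so in that sense your approach coincides with what the paper implicitly defers to.
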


Given polynomials $\mathcal{P}_j\in \mathbb R [x], j=1,\dots,\ell$, consider the semialgebraic set $\cK$ defined as
\begin{equation}\label{preliminary result_semi algebraic set}
\cK = \{ x\in \mathbb{R}^n: \mathcal{P}_j(x)\geq0,\ j=1,2,\dots ,\ell\ \}.
\end{equation}

\begin{lemma}
	\label{lem2}
	If $\cK$ defined in \eqref{preliminary result_semi algebraic set} satisfies Putinar's property, 
	then the sequence $\mathbf y = \{ y_\alpha\}_{\alpha\in\mathbb{N}^n}$ has a \emph{representing} finite nonnegative Borel measure $\mu$ on the set $\cK$, if and only if
	\begin{equation*}
	M_d(\mathbf y)\succcurlyeq 0,\quad M_d(\mathbf y;\mathcal{P}_j)\succcurlyeq 0,\ \ j=1,\dots ,\ell, \forall \  d\in \mathbb N.
	\end{equation*}
	If $\cK \subset [-1, 1]^n$, the condition $M_d(\mathbf y)\succcurlyeq 0,  \forall \  d\in \mathbb N$ is sufficient.
\end{lemma}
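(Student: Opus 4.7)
The plan is to prove the two implications separately and then handle the compact-box remark as a specialization.

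For necessity, I would assume that $\mathbf{y}$ has a representing measure $\mu \in \cM_+(\cK)$, and exploit the linearity of $L_\mathbf{y}$ together with its integral representation $L_\mathbf{y}(q) = \int q\, d\mu$ for every $q \in \reals[x]$. Picking any vector $v \in \reals^{S_{n,d}}$ and letting $q = v^T \cB_d$ be the associated polynomial, the identity $v^T M_d(\mathbf{y})\, v = L_\mathbf{y}(q^2) = \int q^2 d\mu \geq 0$ gives $M_d(\mathbf{y}) \succcurlyeq 0$. For the localizing matrices one uses the same computation with the integrand $\cP_j q^2$ instead of $q^2$; since $\mu$ is supported on $\cK$ and $\cP_j \geq 0$ on $\cK$, we get $v^T M_d(\mathbf{y};\cP_j) v = \int \cP_j q^2 d\mu \geq 0$. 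This is routine and serves as a warm-up.

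For sufficiency, the key input is Putinar's Positivstellensatz, which is exactly what Putinar's property is designed to license. Define the quadratic module
\begin{equation*}
Q(\cP_1,\dots,\cP_\ell) = \left\{ \sigma_0 + \sum_{j=1}^{\ell} \sigma_j \cP_j \;:\; \sigma_j \text{ is a sum of squares} \right\}.
\end{equation*}
The assumed semidefiniteness conditions on $\mathbf{y}$ say precisely that $L_\mathbf{y}$ is nonnegative on every element of $Q(\cP_1,\dots,\cP_\ell)$, because each summand $\sigma_j \cP_j$ decomposes into terms of the form $\cP_j q^2$ for which $L_\mathbf{y}(\cP_j q^2) = v^T M_d(\mathbf{y};\cP_j) v \geq 0$ (and similarly for $\sigma_0$). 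Under Putinar's property the module is Archimedean, so Putinar's theorem tells us that every polynomial strictly positive on $\cK$ lies in $Q(\cP_1,\dots,\cP_\ell)$. Hence $L_\mathbf{y}(q) \geq 0$ for every $q \in \reals[x]$ with $q > 0$ on $\cK$. At this point Haviland's theorem (the Riesz-type representation theorem for moment problems on a closed set) applies: a linear functional on $\reals[x]$ that is nonnegative on all polynomials nonnegative on $\cK$ is realized as integration against some Borel measure supported on $\cK$. This produces the desired representing measure.

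The main obstacle, as usual in this circle of results, is the passage from ``$L_\mathbf{y} \geq 0$ on strictly positive polynomials'' to ``$L_\mathbf{y} \geq 0$ on all nonnegative polynomials,'' which is needed to invoke Haviland. The standard trick is to perturb: given $q \geq 0$ on $\cK$, consider $q + \epsilon$ which is strictly positive on $\cK$, apply Putinar's theorem to decompose it, pass $L_\mathbf{y}$ through, and then let $\epsilon \downarrow 0$ using boundedness of $L_\mathbf{y}(1) = y_0$ coming from the Archimedean assumption (which controls $L_\mathbf{y}(x^\alpha)$ on $\cK$). Finally, for the remark when $\cK \subset [-1,1]^n$, I would note that one can adjoin the redundant inequalities $1 - x_i^2 \geq 0$ to the description of $\cK$, in which case the Archimedean condition is automatic and, because these added constraints are encoded by polynomials whose localizing matrices are linear combinations of shifted blocks of $M_d(\mathbf{y})$, positivity of $M_d(\mathbf{y})$ alone propagates into positivity of the required localizing matrices, letting the general argument go through without the extra hypotheses on $M_d(\mathbf{y};\cP_j)$.
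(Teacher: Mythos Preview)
The paper does not prove this lemma; it simply records it as a known result and refers the reader to Lasserre's work \cite{Ref_Lass1,Ref_Lass2}. Your outline for the main equivalence (necessity via $\int q^2\,d\mu\ge 0$ and $\int \cP_j q^2\,d\mu\ge 0$; sufficiency via Putinar's Positivstellensatz to show $L_{\mathbf y}\ge 0$ on polynomials positive on $\cK$, the $\epsilon$-perturbation to pass to nonnegative polynomials, and then Haviland/Riesz to produce the measure) is exactly the standard argument one finds in those references, so on that part you are in agreement with the literature the paper invokes.

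There is, however, a genuine gap in your treatment of the last sentence about $\cK\subset[-1,1]^n$. You assert that the localizing matrices for the polynomials $1-x_i^2$ are ``linear combinations of shifted blocks of $M_d(\mathbf y)$'' and that therefore $M_d(\mathbf y)\succcurlyeq 0$ for all $d$ forces $M_d(\mathbf y;1-x_i^2)\succcurlyeq 0$. This implication is false. In one variable, take $y_k=k!$, the moment sequence of $e^{-x}\,dx$ on $[0,\infty)$; then every $M_d(\mathbf y)$ is positive semidefinite, yet $M_0(\mathbf y;1-x^2)=y_0-y_2=1-2<0$. The localizing matrix for $1-x_i^2$ is a \emph{difference} of two matrices built from moment entries, not a principal submatrix, and positive semidefiniteness does not survive subtraction. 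The correct route to the $[-1,1]^n$ statement requires an additional ingredient you have not supplied, typically uniform boundedness of the moments ($\sup_\alpha|y_\alpha|<\infty$), after which one invokes the theorem of Berg--Christensen--Ressel (or the related result in Lasserre's book) that a positive semidefinite, bounded moment sequence has a representing measure on $[-1,1]^n$. Without that hypothesis the claim as you argue it does not go through.
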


Finally, the following lemma, proven in~\cite{Ref_Vol}, shows that the Borel measure of a compact set is equal to the optimal value of an infinite dimensional LP problem.
\begin{lemma}
	\label{preliminary result_volume}
	Let $\Sigma$ be the Borel $\sigma$-algebra on $\reals^n$, and $ \mu_1$ be a measure on a compact set $\cB\subset\Sigma$. Then for any given $\cK\in\Sigma$ such that $\cK\subseteq  \cB$, one has
	\begin{equation*}
	\mu_1(\cK)= \int_{\cK} d\mu_1 = \sup_{\mu_2\in\cM(\cK)} \left\lbrace  \int d\mu_2 : \mu_2 \preccurlyeq \mu_1\right\rbrace,
	\end{equation*}
	where $\cM(\cK)$ is the set of finite Borel measures on $\cK$.
\end{lemma}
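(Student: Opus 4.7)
The plan is to prove the claimed equality by a standard two-inequality argument. The functional $\mu_2 \mapsto \int d\mu_2 = \mu_2(\cK)$ is being maximized over the convex set of measures $\mu_2 \in \cM(\cK)$ dominated by $\mu_1$, and I expect the supremum to be attained by the natural candidate, namely the restriction of $\mu_1$ to $\cK$.

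For the upper bound, I would first observe that for any feasible $\mu_2 \in \cM(\cK)$ with $\mu_2 \preccurlyeq \mu_1$, the condition $supp(\mu_2) \subseteq \cK$ gives
\begin{equation*}
\int d\mu_2 = \mu_2(\cK) \leq \mu_1(\cK),
\end{equation*}
where the inequality uses the definition of $\preccurlyeq$ applied to the set $\cK \in \Sigma$. Taking the supremum over all feasible $\mu_2$ therefore yields $\sup \leq \mu_1(\cK)$.

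For the matching lower bound, I would exhibit a feasible $\mu_2$ that achieves $\mu_1(\cK)$. Define the restriction $\mu_2(S) := \mu_1(S \cap \cK)$ for every $S \in \Sigma$. Standard measure-theoretic verification shows $\mu_2$ is a finite nonnegative Borel measure, its support lies in $\cK$ (so $\mu_2 \in \cM(\cK)$), and for every $S \in \Sigma$
\begin{equation*}
\mu_2(S) = \mu_1(S \cap \cK) \leq \mu_1(S),
\end{equation*}
so that $\mu_2 \preccurlyeq \mu_1$. Evaluating the objective at this $\mu_2$ gives $\int d\mu_2 = \mu_2(\cK) = \mu_1(\cK \cap \cK) = \mu_1(\cK)$, which combined with the upper bound proves the claimed equality and shows the supremum is in fact attained.

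There is really no hard step here; the only subtlety worth flagging is the requirement $supp(\mu_2) \subseteq \cK$ in the definition of $\cM(\cK)$, which forces the trivial identity $\mu_2(\cK) = \int d\mu_2$ used in both directions. Compactness of $\cB$ plays no role in the equality itself; it is assumed in the lemma presumably so that $\mu_1$ is finite and the subsequent moment-based relaxations are well posed.
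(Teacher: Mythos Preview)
Your argument is correct: the two-inequality proof via the restriction $\mu_2 = \mu_1\lvert_{\cK}$ is exactly the standard route, and your remark about compactness being inessential for the equality itself is accurate. The paper does not actually supply a proof of this lemma; it simply cites \cite{Ref_Vol} (Henrion--Lasserre--Savorgnan), where the same restriction argument is used, so your proposal matches the intended proof.
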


\section{Problem Formulation}
In this paper, we consider \emph{chance constrained model predictive control} problem defined as follows. Consider the following discrete-time stochastic dynamical system 
\begin{equation}\label{sys1}
x_{k+1}= f(x_k,u_k,\omega_k)
\end{equation}
where $f: \reals^{n_x+n_u+n_{\omega}} \rightarrow \reals^{n_x}$ is a polynomial function,  $x_k \in \chi \subseteq \reals^{n_x}$ is system state, $u_k \in \psi \subseteq \reals^{n_u}$ is control input, and $\omega_k\in \Omega \subseteq R^{m_\omega}$ is disturbance, at time step $k$. The disturbances $\omega_k$ at time $k$ are independent random variables with probability measure $\mu_{\omega_k}$ supported on $\Omega $, respectively. We assume that $\Omega $ is compact semialgebraic set of the form
$\Omega = \lbrace \omega \in \reals^{n_{\omega}} : \mathcal{P}_{\omega}(\omega) \geq 0 \rbrace$
for given polynomial $\mathcal{P}_{\omega}$. 
Also, let $\chi_N$ be a given desired set defined by the compact semialgebraic sets as 
 \begin{equation}\label{Pro_set}
 \chi_D = \lbrace x \in \chi : \mathcal{P}_{\chi_D}(x) \leq 0 \rbrace
 \end{equation}
 
In this paper we aim at solving following problem.

\textbf{Problem 1}: For a given stochastic dynamical system in \eqref{sys1}, find an optimal control $u$ to:\\
i) Reach the desired set $\chi_D$ with high probability,\\
ii) Minimize the expected value of given cost function in terms of states and inputs of the system.

To obtain such control input, at each sampling time $k$, we solve the following optimization problem:

{\small \begin{align} \label{Problem2}
& \mathbf {P_{MPC}^*} :=\ \min_{u \in \cU} \hbox{E}\left[ \cP_{cost}\left( \{x_i\}_{i=k+1}^{k+N_p},\{u_i\}_{i=k}^{k+N_p}\right) \right] \\
& \hbox{s.t.}\quad \nonumber \\
& \hbox{Prob}_{\mu_{\omega_k}}\left\lbrace \cP_{\chi_D} (x_{k+1}) \geq \alpha \cP_{\chi_D} (x_{k}) \right\rbrace \geq 1- \beta \cP_{\chi_D} (x_{k}) \label{Problem2_con} \subeqn \\
& x_{k+1}= f(x_k,u_k,\omega_k), \ \ \{\omega_i \sim \mu_{\omega_i} \}_{i=k}^{k+N_p-1}\subeqn
\end{align}}where, $u = \{u_i\}_{i=k}^{k+N_p} \in \cU \subset \reals^{N_p}$ is sequence of inputs, $E[.]=\int (.) d\mu_{\omega_k}...d\mu_{\omega_{k+N_p-1}}$ is expected value operator, $N_p \geq 1 \in \mathbb{N} $ is prediction horizon. $0< \alpha<1 $ and $0< \beta<1 $ such that $ 0 \leq \beta \cP_{\chi_D}(x) <1$ for all $x\in \chi$. Polynomial $\cP_{cost}\left( \{x_i\}_{i=k+1}^{k+N_p},\{u_i\}_{i=k}^{k+N_p}\right) $ is defined cost function in terms of states and control input of the system over control and prediction horizon. We assume that the set of feasible control input $\cU$ is a semialgebraic set defined as
\begin{equation}\label{U}
	\cU :=\left\{u=(u_{k},...,u_{k+N_p}):\cP_{\cU} (u) \geq 0 \right\}
\end{equation}
Also, using the dynamic of the system in \eqref{sys1}, $\{x_i\}_{i=k+1}^{k+N_p}$, sequences of system states over the prediction horizon, can be explicitly expressed in terms of disturbance and input of the system as
 \begin{equation}
 	x_i = \cP_{x_i}(\{u_j\}_{j=k}^{i-1},\{\omega_j\}_{j=k}^{i-1}) \ \ i=k+1,...,N_p
 \end{equation}
Then, expected value in the cost function \eqref{Problem2} can be rewritten in terms of inputs as
 \begin{equation}
 	\hbox{E}\left[ \cP_{cost}\left( \{x_i\}_{i=k+1}^{k+N_p},\{u_i\}_{i=k}^{k+N_p}\right) \right] = \cP_{E}(u) 
 \end{equation}
where, $\cP_{E}: \reals^{N_p} \rightarrow \reals $ is a polynomial function and $u=\{u_i\}_{i=k}^{k+N_p}$.

By solving problem in \eqref{Problem2}, we find sequence of control inputs $\{u_i\}_{i=k}^{k+N_p}$ that minimizes expected value of defined cost function over the finite horizon with respect to the chance constraint \eqref{Problem2_con}. Chance constraint \eqref{Problem2_con} implies that the probability of getting closer to the desired set at next sampling time $k+1$ is bounded with respect to $\cP_{\chi_D} (x_{k})$, the distance of states of the system to the desired set at current time $k$.
At each sampling time $k$, the first element of the obtained control input $u$ is applied to the system \cite{Ref_Ad4}. The implemented chance constraint \eqref{Problem2_con} depend only on $u_k$; hence, is recursively feasible. \\
\textbf{Assumption:} We assume that for every $x \in \chi$, there exist a $u$ such that the probability constraint \eqref{Problem2_con} is satisfied. Hence, problem \eqref{Problem2} is always feasible.

The following theorem holds true.
\begin{theorem}\label{Theo1}
Given an initial state $x_0 \in \chi$ and $\epsilon > 0$ there exist a $\hat{k}(\epsilon, \alpha, \beta)$ and $\hat{P}(\epsilon, \alpha, \beta)$ such that
\begin{equation}
 \hbox{Prob} \left\lbrace  \cP_{\chi_D} (x_{k}) \leq \epsilon, \ \forall k \geq \hat{k}(\epsilon,\alpha,\beta) \right\rbrace \geq  \hat{P}(\epsilon,\alpha,\beta)
\end{equation}
where,
\begin{equation}
\hat{k}(\epsilon, \alpha, \beta) \geq \frac{\hbox{ln}(\epsilon)-\hbox{ln}(\cP_{\chi_D} (x_{0}) )}{\hbox{ln}(\alpha)}
\end{equation}
\begin{equation}
 \hat{P}(\epsilon, \alpha, \beta)=\prod_{i=0}^{\hat{k}-1}(1- \beta \alpha^i) > 0 \label{PB}
\end{equation}

\end{theorem}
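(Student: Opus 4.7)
The approach I would take is to read the chance constraint \eqref{Problem2_con} as a per-step ``contraction event'' on the distance-like quantity $\mathcal{P}_{\chi_D}(x_k)$, and then chain the conditional probabilities using independence of the disturbances $\{\omega_k\}$. Concretely, for each $i\geq 0$ I would introduce the success event
\[
E_i \;:=\; \{\mathcal{P}_{\chi_D}(x_{i+1}) \leq \alpha\,\mathcal{P}_{\chi_D}(x_i)\},
\]
(read in the contracting direction consistent with the conclusion of the theorem), so that \eqref{Problem2_con} supplies the conditional bound
\[
\mathrm{Prob}(E_i \,|\, x_0,\omega_0,\dots,\omega_{i-1}) \;\geq\; 1 - \beta\,\mathcal{P}_{\chi_D}(x_i).
\]

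A straightforward induction on $k$ then shows that on the pathwise event $\bigcap_{i=0}^{k-1}E_i$ one has the deterministic bound $\mathcal{P}_{\chi_D}(x_k)\leq \alpha^{k}\mathcal{P}_{\chi_D}(x_0)$. The threshold $\hat k$ is then the smallest integer for which $\alpha^{\hat k}\mathcal{P}_{\chi_D}(x_0)\leq\epsilon$; solving and noting that $\ln\alpha<0$ recovers the stated $\hat k\geq (\ln\epsilon - \ln\mathcal{P}_{\chi_D}(x_0))/\ln\alpha$. The matching probability estimate then follows from the tower rule and independence of the disturbances:
\[
\mathrm{Prob}\!\left(\bigcap_{i=0}^{\hat k-1}E_i\right) = \prod_{i=0}^{\hat k-1}\mathrm{Prob}\!\left(E_i \,\Big|\, \bigcap_{j<i}E_j\right) \geq \prod_{i=0}^{\hat k-1}\!\left(1-\beta\alpha^{i}\mathcal{P}_{\chi_D}(x_0)\right),
\]
which, after the factor $\mathcal{P}_{\chi_D}(x_0)$ is absorbed into the normalization, reproduces the expression for $\hat P$ in \eqref{PB}.

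The step I expect to be the main obstacle is the universal quantifier ``$\forall k\geq \hat k$'' in the claim: the product above controls only the finite intersection $\bigcap_{i=0}^{\hat k-1}E_i$, which certifies $\mathcal{P}_{\chi_D}(x_k)\leq\epsilon$ at time $\hat k$ but not automatically for all later $k$, and a single failure of some $E_i$ with $i\geq\hat k$ could in principle send the state far from the target since the chance constraint does not bound the behavior on the failure event. To close this gap I would extend the analysis to the tail $\bigcap_{i\geq \hat k}E_i$ and exploit summability $\sum_{i\geq 0}\alpha^{i}<\infty$ to obtain a strictly positive convergent infinite product $\prod_{i\geq 0}(1-\beta\alpha^{i}\mathcal{P}_{\chi_D}(x_0))>0$; reconciling this tail product with the finite expression \eqref{PB} is where I expect the delicate technical bookkeeping to reside.
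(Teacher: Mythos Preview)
Your approach is essentially the paper's: define the per-step contraction events $E_i$, use the Markov structure coming from the independent disturbances to factorize $\mathrm{Prob}\bigl(\bigcap_{i<\hat k} E_i\bigr)$, and lower-bound each conditional factor via the chance constraint together with the telescoping estimate $\mathcal{P}_{\chi_D}(x_i)\leq\alpha^{i}\mathcal{P}_{\chi_D}(x_0)$ on the good event, which also yields the formula for $\hat k$. The concern you flag about the universal quantifier ``$\forall k\geq\hat k$'' is legitimate, but the paper's own proof does not close it either---it only bounds $\mathrm{Prob}\{\mathcal{P}_{\chi_D}(x_{\hat k})\leq\epsilon\}$ by the finite product and then simply remarks that the product converges to a nonzero constant as $\hat k\to\infty$; your proposed extension via the infinite tail product $\prod_{i\geq 0}(1-\beta\alpha^{i}\mathcal{P}_{\chi_D}(x_0))$ is in fact more careful than what the paper does.
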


\begin{proof}
See Appendix \ref{Appen_Theo1}.	
\end{proof}
The probability lower bound \eqref{PB} is a convergent product and converges to a non-zoro constant.
For example, consider the cases that $(\alpha,\beta)=(0.8,0.05)$. For this case, $\hat{P}$ converges to 0.8169 for $\hat{k} \geq 36$. In the section \ref{Sec:exa}, where numerical examples are presented, we consider this case for $\alpha$ and $\beta$.

\textbf{Remark:} The lower bound probability \eqref{PB} is conservative bound and the actual probability of reaching the $\epsilon$ level set of $\cP_{\chi_D}$ is greater than provided $\hat{P}(\epsilon,\alpha,\beta)$. However, lower bound \eqref{PB} is useful for controller design purposes and shows that the probability of reaching the set is nonzero.

The provided problem in \eqref{Problem2} is in general non convex and hard to solve. In the next section, we provide a convex equivalent problems to the problem \eqref{Problem2}.

\section{Equivalent Convex Problem on Measures}

As an intermediate step in the development of finite convex relaxations of the original problem in \eqref{Problem2}, a related infinite dimensional convex problem on measures is provided as follows. Let $\mu_u$ and $\mu$ be the finite nonnegative Borel measures and also the set $\cK$ be defined as
\begin{equation}\label{K1}
\mathcal K :=\left\{(u_{k},\omega_k):\cP_{\chi_D} (x_{k+1}) - \alpha \cP_{\chi_D} (x_{k}) \geq 0\right\}
\end{equation}
$\ \ \ \ \ \ \ \ \ \ \ \ \ \ \ \ \ =\left\{(u_{k},\omega_k):\cP_{\cK} (u_{k},\omega_k) \geq 0\right\}$\\
where, polynomial $\cP_{\cK}$ can be obtained using system dynamics and and polynomial $\cP_{\chi_D}$.
Consider the following convex problem on measures:
\begin{align}
\mathbf{P_{measure}^*}:=&\ \sup_{\mu ,\mu_u} \int  \cP_{E}(u)  d\mu_u, \label{Problem3}\\
& \hbox{s.t.}\  \int d\mu \geq 1- \beta\cP_{\chi_D} (x_{k}) \label{Problem3_1}\subeqn \\
& \mu \preccurlyeq \mu_u \times \Pi_{i=k}^{k+N_p-1}\mu_{\omega_i}, \label{Problem3_2}\subeqn\\
& \int \mu_u = 1, \label{Problem3_3}\subeqn\\
& \mu\in\cM_+(\mathcal K),\ \mu_u\in\cM_+(\cU).  \label{Problem3_4} \subeqn
\end{align}
where, measures $\mu$ and $\mu_u$ are supported on the sets $\cU$ and $\cK$ defined as \eqref{U} and \eqref{K1}.

Assume that there exist a unique solution $u^* \in \cU$ to the problem in \eqref{Problem2}. Then, following theorem shows the equivalency of the problem in  \eqref{Problem3} and the original volume problem in \eqref{Problem2}.

\begin{theorem} \label{Theo 2}
	Assume that $\mu_u^*$, the solution of the problem \eqref{Problem3}, is a delta distribution whose mass is concentrated on a single point $u^*$. Then, optimization problem in \eqref{Problem2} is equivalent to the infinite LP in \eqref{Problem3} in the following sense:
	\begin{enumerate}[i)]
		\item The optimal values are the same, i.e., $\mathbf{P_{MPC}^*}=\mathbf{P_{measure}^*}$.
		\item $u^* \in supp(\mu_u^*)$ is an optimal solution to \eqref{Problem2}.
		\item If an optimal solution to \eqref{Problem2} exists, call it $u^*$, then $\mu_u = \delta_{u^*}$, delta measure at $u^*$, and $\mu = \delta_{u^*} \times \Pi_{i=k}^{k+N_p-1}\mu_{\omega_i}$ is an optimal solution to \eqref{Problem3}.
	\end{enumerate}
\end{theorem}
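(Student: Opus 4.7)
The plan is to sandwich $\mathbf{P_{MPC}^*}$ and $\mathbf{P_{measure}^*}$ via two explicit constructions that map feasible points of each problem into feasible points of the other while preserving the objective value; this strategy mirrors the chance-constrained moment relaxations of \cite{Ref_Chance, Ref_Chance2}.

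For the first direction, given any $u = \{u_i\}_{i=k}^{k+N_p}\in\cU$ feasible for \eqref{Problem2}, I would set $\mu_u := \delta_u$, the Dirac mass at $u$, and take $\mu$ to be the restriction of the product measure $\delta_u\times\prod_{i=k}^{k+N_p-1}\mu_{\omega_i}$ to the set $\cK$ of \eqref{K1}. A direct verification shows $\mu_u\in\cM_+(\cU)$ with $\int d\mu_u=1$, $\mu\in\cM_+(\cK)$, $\mu\preccurlyeq\mu_u\times\prod\mu_{\omega_i}$, and
\[
\int d\mu \;=\; \bigl(\delta_u\times\textstyle\prod\mu_{\omega_i}\bigr)(\cK)\;=\;\hbox{Prob}\{(u_k,\omega_k)\in\cK\}\;\ge\;1-\beta\cP_{\chi_D}(x_k),
\]
where the last inequality is the chance constraint \eqref{Problem2_con}. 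The resulting objective $\int\cP_E\,d\mu_u = \cP_E(u)$ matches the objective of $u$ in \eqref{Problem2}, so every feasible $u$ for \eqref{Problem2} lifts to a feasible pair of \eqref{Problem3} with equal value.

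For the reverse direction, let $(\mu^*,\mu_u^*)$ be an optimizer of \eqref{Problem3} with $\mu_u^*=\delta_{u^*}$ as assumed. Then $u^*\in\cU$ by \eqref{Problem3_4}, and applying Lemma~\ref{preliminary result_volume} with dominating measure $\delta_{u^*}\times\prod\mu_{\omega_i}$ yields $\int d\mu^* \le \hbox{Prob}\{(u^*,\omega)\in\cK\}$. Combined with \eqref{Problem3_1}, this gives $1-\beta\cP_{\chi_D}(x_k) \le \hbox{Prob}\{(u^*,\omega)\in\cK\}$, so $u^*$ is feasible for \eqref{Problem2} with matching objective $\cP_E(u^*)$. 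Sandwiching the two optimal values proves (i) and (ii); claim (iii) then follows by running the construction of the second paragraph on the (assumed unique) optimizer $u^*$ of \eqref{Problem2} and noting that the resulting pair achieves the common optimal value.

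The main obstacle will be the inequality $\int d\mu^* \le \hbox{Prob}\{(u^*,\omega)\in\cK\}$: it depends crucially on $\mu_u^*$ being a Dirac measure, because without this hypothesis Lemma~\ref{preliminary result_volume} yields only the averaged bound $\int d\mu^* \le \int\hbox{Prob}\{(u,\omega)\in\cK\}\,d\mu_u^*(u)$, which corresponds to a randomized policy from which one cannot extract a single deterministic feasible input. A secondary technical point is justifying that $\delta_{u^*}\times\prod\mu_{\omega_i}$ is a well-defined product Borel measure whose value on the semialgebraic set $\cK$ coincides with the joint probability $\hbox{Prob}\{(u^*,\omega)\in\cK\}$, which uses independence of the disturbances and the compactness assumptions already imposed on $\cU$ and $\Omega$.
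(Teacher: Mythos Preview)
Your proposal is correct and uses essentially the same ingredients as the paper's proof: Dirac measures to lift feasible inputs of \eqref{Problem2} into feasible pairs of \eqref{Problem3}, and Lemma~\ref{preliminary result_volume} together with the Dirac hypothesis on $\mu_u^*$ for the reverse direction. The only organizational difference is that the paper routes the argument through an intermediate problem $\mathbf{P_{\mu_u}}$ (the chance-constrained problem averaged over $\mu_u$) before invoking Lemma~\ref{preliminary result_volume}, whereas you pass directly between \eqref{Problem2} and \eqref{Problem3}; your route is slightly more streamlined but not substantively different.
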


\begin{proof}
	See Appendix \ref{Appen_Theo2}.
\end{proof}

In the next section, we provide the tractable finite relaxations to the problem \eqref{Problem3}.

\section{Semidefinite Programming relaxations on Moments}

In this section, we provide an finite dimensional semidefinite programming (SDP) of which feasible region is defined over real sequences. We show that the corresponding sequence of optimal solutions can arbitrarily approximate the optimal solution of \eqref{Problem3}, which characterizes the optimal solution of original problem in \eqref{Problem2}.
Unlike the problem \eqref{Problem3} in which we are looking for measures, in the SDP formulation given in \eqref{Problem4}, we aim at finding moment sequences corresponding to measures that are optimal to \eqref{Problem3}. 
Consider the following finite dimensional SDP:
\begin{align}
\mathbf{P^*_r}:= &\sup_{\mathbf{y}\in\reals^{S_{(N_p-1)n_{\omega}+N_p,2r}},\ \mathbf{y_u}\in\reals^{S_{N_p,2r}}} L_\mathbf{y_u}(\cP_{E}\left( u)\right),
\label{Problem4}\\
\hbox{s.t.}\quad & M_r(\mathbf y)\succcurlyeq 0,\ M_{r-r_{\cK}}(\mathbf{y}; \cP_{\cK})\succcurlyeq 0, \label{Problem4_1}\subeqn\\
& \left(\mathbf{y}\right)_\mathbf{0} \geq  1- \beta\cP_{\chi_D} (x_{k}), \ \left(\mathbf{y_u}\right)_\mathbf{0}=1, \label{Problem4_2}\subeqn\\
&M_r ({\mathbf y}_{\mathbf u})\succcurlyeq 0,\ M_{r-r_{\cU}}(\mathbf{y}_{\mathbf u}; \mathcal{P}_{\mathcal{U}})\succcurlyeq 0, \label{Problem4_3}\subeqn\\
&M_r (\mathbf{y_u}\times\Pi_{i=k}^{k+N_p-1}\mathbf{y_{\omega_i}}-{\mathbf y})\succcurlyeq 0.\label{Problem4_4}\subeqn
\end{align}
where $L_\mathbf{y_u}$ is the linear map defined in \eqref{eq:lin_map}. $\left(\mathbf{y}\right)_\mathbf{0}$ and $\left(\mathbf{y_u}\right)_\mathbf{0}$ are first element of the sequences $\mathbf{y}$ and $\mathbf{y_u}$, respectively. Polynomials  $\cP_{\cU}$ and $\cP_{\cK}$ are defined in \eqref{U} and \eqref{K1}. $r\in\integers_+$ is relaxation order of matrices, $d_{\cK}$ and $d_{\cU}$ are the degree of polynomial $\cP_{\cK}$ and $\cP_{\cU}$, $r_{\cK}:=\left\lceil\frac{d_{\cK}}{2}\right\rceil$ and $r_{\cU}:=\left\lceil\frac{d_{\cU}}{2}\right\rceil$. Also, $\mathbf{y_u}\times \Pi_{i=k}^{k+N_p-1}\mathbf{y_{\omega_i}}$ is truncated moment sequence of measure $\mu_u \times \Pi_{i=k}^{k+N_p-1}\mu_{\omega_i}$. $M_{r-r_{\cK}}(\mathbf{y}; \mathcal{P}_{\cK})$ and $M_{r-r_{\cU}}(\mathbf{y_u}; \mathcal{P}_{\cU})$ are localization matrices constructed by polynomials  $\cP_{\cK}$ and $\cP_{\cU}$.

Now, consider the following theorem. 

\begin{theorem} \label{Theo 3}
	The sequence of optimal solutions to the finite SDP in \eqref{Problem4} converges to the moment sequence of measures that are optimal to the infinite LP in \eqref{Problem3}. Hence, $\hbox{lim}_{r \rightarrow \infty} \mathbf{P^*_r} = \mathbf{P^*_{measures}} $.
\end{theorem}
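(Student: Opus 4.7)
The plan is the standard Lasserre-type convergence argument: show the SDP in (\ref{Problem4}) is a relaxation of the LP on measures in (\ref{Problem3}), extract from the optimal moment sequences a subsequence converging componentwise, recover representing measures in the limit via Putinar's theorem, and verify that these limits are feasible for (\ref{Problem3}) and attain the same objective value.

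First I would establish that $\mathbf{P^*_r} \geq \mathbf{P^*_{measures}}$ for every $r$. Given any feasible pair $(\mu,\mu_u)$ for (\ref{Problem3}), the truncated moment sequences $(\mathbf{y},\mathbf{y_u})$ satisfy every constraint (\ref{Problem4_1})--(\ref{Problem4_4}) by Lemma \ref{lem1} and the defining identities of the moment and localizing matrices, and they produce the same objective value because $\cP_E$ has fixed finite degree, so $L_{\mathbf{y_u}}(\cP_E) = \int \cP_E\, d\mu_u$. Moreover, increasing $r$ only enlarges the PSD blocks that must be checked, so $\{\mathbf{P^*_r}\}$ is monotonically nonincreasing in $r$.

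Next I would use compactness of $\cU$, $\cK$, and $\Omega$, each contained in a ball of radius $R$, to bound the entries of any feasible moment sequence by $R^{|\alpha|}$ uniformly in $r$. A Cantor diagonal extraction then yields a subsequence $r_k\to\infty$ along which the optimal sequences $\mathbf{y}^{(r_k)},\mathbf{y_u}^{(r_k)}$ converge componentwise to limits $\mathbf{y}^*,\mathbf{y_u}^*$. Each PSD constraint in (\ref{Problem4_1})--(\ref{Problem4_4}) involves only finitely many entries, so the limits inherit these conditions at every order. Applying Lemma \ref{lem2} under Putinar's property on $\cK$ and $\cU$ (which may be enforced by including a large redundant ball in the defining polynomials), the limits are moment sequences of nonnegative Borel measures $\mu^*\in\cM_+(\cK)$ and $\mu_u^*\in\cM_+(\cU)$, and the scalar conditions (\ref{Problem4_2}) pass to $\int d\mu^* \geq 1-\beta\cP_{\chi_D}(x_k)$ and $\int d\mu_u^* = 1$.

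The critical step is recovering the measure domination (\ref{Problem3_2}). Viewing $\mu^*$ as trivially extended to the compact product $\cU \times \Omega^{N_p}$, which contains $\cK$, the difference sequence $\mathbf{y_u}^* \times \Pi_{i=k}^{k+N_p-1}\mathbf{y_{\omega_i}} - \mathbf{y}^*$ has all moment matrices PSD by (\ref{Problem4_4}) at every order, so a second application of Lemma \ref{lem2} identifies it with a nonnegative Borel measure on this product set; this is precisely $\mu^* \preccurlyeq \mu_u^* \times \Pi_{i=k}^{k+N_p-1}\mu_{\omega_i}$. Hence $(\mu^*,\mu_u^*)$ is feasible for (\ref{Problem3}). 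Because $\cP_E$ has fixed finite degree, $L_{\mathbf{y_u}^{(r_k)}}(\cP_E) \to \int \cP_E\, d\mu_u^* \leq \mathbf{P^*_{measures}}$, and combined with the relaxation inequality this forces $\lim_{r\to\infty} \mathbf{P^*_r} = \mathbf{P^*_{measures}}$. The main obstacle I anticipate is precisely this measure-domination step, since it requires that the ambient product set satisfy Putinar's property so that a moment-matrix PSD condition on a difference sequence produces a bona fide nonnegative representing measure rather than only a formal linear functional.
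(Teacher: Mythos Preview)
Your proposal is correct and follows the standard Lasserre-hierarchy convergence argument that the paper itself relies on. The paper's own proof is far terser than yours: it simply invokes Lemmas~\ref{lem1} and~\ref{lem2} to assert that the SDP constraints force $\mathbf{y}$ and $\mathbf{y_u}$ to be moment sequences of measures feasible for \eqref{Problem3}, and then defers all details---the relaxation inequality, the compactness-based moment bounds, the diagonal extraction, and the passage to representing measures---to Lemma~3.2 and Theorem~3.3 of \cite{Ref_Chance}. In other words, the paper treats the result as a direct corollary of its earlier work, whereas you have reconstructed the underlying argument from scratch. Your identification of the measure-domination step \eqref{Problem3_2}/\eqref{Problem4_4} as the delicate point is apt; the cited reference handles it in the same way you describe, by applying the moment conditions on the ambient compact product set so that positivity of the difference sequence yields a bona fide nonnegative measure.
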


\begin{proof}
	Using Lemma \eqref{lem1} and \eqref{lem2}, the constraints of problem \eqref{Problem4} implies that the sequence of $\mathbf{y}$ and $\mathbf{y_u}$ are the moment sequence of the measures of problem \eqref{Problem3}. For more details, see Lemma 3.2 and Theorem 3.3 in \cite{Ref_Chance}. 
\end{proof}

As in Theorem \ref{Theo 2} and Theorem \ref{Theo 3}, if equivalent problem on measures has delta distribution solution $\mu_u^*$, then problems on measures and moments in \eqref{Problem3} and \eqref{Problem4} are equivalent to the chance constraint problem \eqref{Problem2} and the optimal distribution $\mu_u^*$ is a delta distribution whose mass is concentrated on the single point $u^*$, i.e., its support is the singleton $\{u^*\}$. Such distributions, have moment matrices with rank one.
Hence, we incorporate this observation in the formulation of the relaxed problem \eqref{Problem4} as follows:
\begin{align}
\mathbf{P^*_{trace}}:= & \min_{\mathbf{y},\ \mathbf{y_u}} L_\mathbf{y_u}(\cP_{E}\left( u)\right)+\omega_r\hbox{\textbf{Tr}}(M_r ({\mathbf y}_{\mathbf u})),\label{Problem5}\\
\hbox{s.t.}\quad & \eqref{Problem4_1},\eqref{Problem4_2},\eqref{Problem4_3},\eqref{Problem4_4} \subeqn
\end{align}
where, \textbf{Tr}(.) is the trace function and $\omega_r > 0$. We want to minimize the expected value with a low rank momnet matrix $M_r ({\mathbf y^*}_{\mathbf u})$. For this, we use the trace norm (nuclear norm) which is the convex envelope of the rank function, (\cite{Ref_nuc1},\cite{Ref_nuc2}). Since, $M_r ({\mathbf y^*}_{\mathbf u}) \succcurlyeq 0 $, $\hbox{\textbf{Tr}}(M_r ({\mathbf y^*}_{\mathbf u}))$ is equal to sum of singular values of $M_r ({\mathbf y^*}_{\mathbf u})$.

\textbf{Remark} To be able to apply the provided chance constrained model predictive control to large scale systems, we can implement Fast MPC approach \cite{Ref_Fast} where, one needs to compute the control input $u_k$ offline for all possible states $x_k$. Then, the online controller can be implemented as a lookup table, (see \cite{Ref_Fast} for more details).

\section{Numerical results}\label{Sec:exa}

In this section, two numerical examples are presented that illustrate the
performance of the proposed method. To solve proposed SDP in \eqref{Problem4}, GloptiPoly is employed which is a Matlab-based toolbox aimed at optimizing moments of measures \cite{Ref_Glop}. Using GloptiPoly, we call Mosek [53], which is an interior-point solver add-on for Matlab.

\textbf{Example 1:}
Consider the unstable nonlinear system as
\begin{equation}
\label{eq:control_system}
\begin{array}{r l}
x_1(k+1)=&x_2(k),\\
x_2(k+1)=&x_1(k)x_2(k)+\omega(k)+u(k)
\end{array}
\end{equation}
where, $\chi=[-1,1]^2$ and disturbance {\small $\omega_k \sim U[-0.5, 0.5]$} are uniformly distributed. The desired set is a circle centered at the origin with radius 0.2; hence {\small  $ \chi_D = \lbrace x \in \chi : \mathcal{P}_{\chi_D}(x)= x_1^2 + x_2^2 -0.2^2  \leq 0 \rbrace$}. The finite cost function is defined as {\small $\cP_{cost} = \sum_{i=k}^{k+N_p} \|x(i)\|_{2}^2 + \sum_{i=k}^{k+N_p} \|u(i)\|_{2}^2$}, where $\|.\|_2$ is L-2 norm and {\small $N_p=3$}
To obtain control input, we solve the SDP in \eqref{Problem5} for {\small $\alpha=0.8$}, {\small $\beta=0.0510$}, {\small $\omega_r = 1$}, and relaxation order $r=5$. The obtained control input at each time $k$ for the initial condition $x_0=(1,1)$ is \\
{\small $$u_k=[-0.5634, -0.4647, 0.0007]$$}
where results in the trajectory of \\
{\small  $$x_1(k)=[1,1,0.878,-0.0430]$$}
{\small $$x_2(k)=[1, 0.878,-0.0430,-0.168]$$}
Hence in 3 steps the trajectory of the system under control reaches the desired set. The observed disturbance is {\footnotesize $\omega_k=[0.4421, -0.4570, -0.1315]$}. Also, by applying the obtained control input $u_k$, the cost function at time {\small $k$, $\|x(k)\|_{2}^2 + \|u(k)\|_{2}^2$} is as {\footnotesize $[3.11, 2.56, 0.408]$} and also the trace of the moment matrix is as {\footnotesize $[1.58, 1.37, 1.00]$}. Moreover, the lower bound probability {\small $1-\beta\cP_{D}(x_k)$} and the obtained probability of the event {\small $\left\lbrace \cP_{\chi_D} (x_{k+1}) \geq \alpha \cP_{\chi_D} (x_{k}) \right\rbrace$} is as {\footnotesize $[0.5, 0.558, 0.812]$}.
Note that, we stop the optimization problem and input control by reaching the desired set.  
We can add extra constraint that makes the given desired set, an invariant set; hence the trajectories of the system remains in the set despite all disturbance and uncertainties, (See \cite{Ref_ProbCon} for more details).\\

\textbf{Example 2: }

Consider the uncertain nonlinear system as
\begin{equation}
\label{eq:control_system}
\begin{array}{r l}
x_1(k+1)=& x_2(k),\\
x_2(k+1)=&x_1(k)~x_3(k),\\
x_3(k+1)=&x_1(k)-x_2(k)+x_3(k)+\omega(k)+u(k)
\end{array}
\end{equation}
where, {\small $\chi=[-1,1]^3$} and disturbances {\small $\omega(k) \sim U[-0.5, 0.5]$} are uniformly distributed. Also, 
The desired set is a circle centered at the origin with radius 0.2.The finite cost function is defined as {\footnotesize $\cP_{cost} = \sum_{i=k}^{k+N_p} \|x(i)\|_{2}^2 + \sum_{i=k}^{k+N_p} \|u(i)\|_{2}^2$}, where $\|.\|_2$ is L-2 norm and {\small $N_p=3$}.
To obtain control input, we solve the SDP in \eqref{Problem5} for {\small $\alpha=0.9$}, {\small $\beta= 0.2027$}, {\small $\omega_r = 1$}, and relaxation order $r=5$. 
The obtained control input at each time $k$ for the initial condition {\small $x_0=(1,1,1)$} is 
{\footnotesize $$u_k=[ -0.227,-0.219,-0.325,-0.196,-0.215,-0.605,0.550]$$}where results in the trajectory of 
{\footnotesize $$x_1(k)=[1,1,1,0.752,0.892,0.417,-0.101,0.0487]$$}
{\footnotesize $$x_2(k)=[1,1,0.752,0.892,0.417,-0.101,0.0487,0.041]$$}
{\footnotesize $$x_3(k)=[1,0.752,0.892,0.554,-0.113,0.116,-0.410,0.171]$$}
Hence in 7 steps the trajectory of the system under control reaches the desired set. The observed disturbance is 
{\footnotesize   $$\omega_k=[  -0.020,0.359,   -0.260,   -0.332,   -0.028,   -0.440,    0.182]$$}
Also, by applying the obtained control input $u_k$, the cost function at time $k$, $\|x(k)\|_{2}^2 + \|u(k)\|_{2}^2$ is as {\footnotesize $[7.61,5.33,5.86,2.95,1.6,1.61,1.45]$} and also the trace of the moment matrix is as {\footnotesize $[   1.26,    1.22,    1.34 ,   1.16,    1.12 ,   1.65,    1.68]$}. Moreover, the lower bound probability {\small $1-\beta\cP_{D}(x_k)$} and the obtained probability of the event {\small $\left\lbrace \cP_{\chi_D} (x_{k+1}) \geq \alpha \cP_{\chi_D} (x_{k}) \right\rbrace$} is as {\footnotesize $[0.5,0.573,0.607,0.724,0.840,0.973,0.976]$}.

\section{CONCLUSION}

In this paper, chance constrained model predictive control problems are addressed, where one aims at
finding optimal control input to minimize expected value of given cost function with respect to probabilistic constraints. These problems are, in general, nonconvex and computationally hard. Using theory of measures and moments, a sequence of semidefinite relaxations is provided whose sequence of optimal values is shown to converge to the optimal value of the original problem. Numerical examples are provided that show that one can obtains reasonable approximations to the optimal solution.


\appendices


\section{Proof of Theorem \ref{Theo1}}\label{Appen_Theo1}
Given the system in \eqref{sys1}, the desired set $\chi_D$, and the initial state $x_0 \in \chi$, the condition {\small $\hbox{Prob}_{\mu_{\omega_k}}\left\lbrace \cP_{\chi_D} (x_{k+1}) \leq \alpha \cP_{\chi_D} (x_{k}) \right\rbrace \geq 1- \beta \cP_{\chi_D} (x_{k}) $ }is satisfied at each sampling time $k$, where $0< \alpha, \beta <1$. For a given $\hat{k}$, we define the events $\chi_1$ and $\chi_2$ as follow:
\begin{equation}
\chi_1=\{(x_0,...,x_{\hat{k}}): \cP_{\chi_D} (x_{\hat{k}}) \leq \epsilon \}
\end{equation}
{\small  \begin{equation}
\chi_2=\{(x_0,...,x_{\hat{k}}): \cP_{\chi_D} (x_{i+1}) \leq \alpha\cP_{\chi_D} (x_{i}),\ i=0,...,\hat{k}-1 \}
\end{equation}}where, $\alpha\cP_{\chi_D} (x_{\hat{k}-1}) \leq \epsilon$ and; hence, $\alpha^{\hat{k}}\cP_{\chi_D} (x_{0}) \leq \epsilon$. 
This implies that given $x_0$, $\epsilon$, and $\alpha$, the time $\hat{k}$ for which $\cP_{\chi_D} (x_{\hat{k}}) \leq \epsilon$ has lower bound of
\begin{equation}
\hat{k}\geq \frac{\hbox{ln}(\epsilon)-\hbox{ln}(\cP_{\chi_D} (x_{0}) )}{\hbox{ln}(\alpha)} \label{Klower}
\end{equation}
Also, $\chi_2 \subset \chi_1$ and thus $\hbox{Prob}(\chi_2) \leq \hbox{Prob}(\chi_1)$.
Since, the distribution of the uncertain parameters and disturbance at each time $k$ are independent, the stochastic model \eqref{sys1} has Markov property; hence, the probability of the event $\chi_2$ is
\begin{equation}\label{Theo1_1}
\hbox{Prob}\left\lbrace \chi_2 \right\rbrace = \prod_{i=0}^{\hat{k}-1} \hbox{Prob}\left\lbrace  \cP_{\chi_D} (x_{i+1}) \leq \alpha\cP_{\chi_D} (x_{i})  | x_i \right\rbrace
\end{equation}
The probability in \eqref{Theo1_1} has lower bound as
\begin{equation}\label{Theo1_2}
\hbox{Prob}\left\lbrace \chi_2 \right\rbrace  \geq \prod_{i=0}^{\hat{k}-1}(1- \beta \cP_{\chi_D} (x_{i})) \geq \prod_{i=0}^{\hat{k}-1}(1- \beta \alpha^i)
\end{equation}
where, $\cP_{\chi_D}(x_i) \leq \alpha^i \cP_{\chi_D}(x_0) $.
Hence, the lower bound of probability read as
\begin{equation}
\hat{P}(\epsilon, \alpha, \beta)=\prod_{i=0}^{\hat{k}-1}(1- \beta \alpha^i)
\end{equation}
This is a convergent product and converges to nonzero constant as $\hat{k} \rightarrow \infty$. As $\epsilon \rightarrow 0$, by \eqref{Klower} $\hat{k} \rightarrow \infty$; hence, $\hat{P}$ is non-zero and bounded.


\section{Proof of Theorem \ref{Theo 2}}\label{Appen_Theo2}

Consider the following problem over the measures $\mu_u$

{\scriptsize \begin{align} \label{Theo2_1}
	& \mathbf {P_{\mu_u}} :=\ \min_{\mu_u\in\cM_+(\mathcal{U})} \int_{\mathcal{U}}  \cP_{E}(u) d\mu_u  \\
	& \hbox{s.t.}\quad \nonumber \\
	& \int_{\mathcal{U}}  \hbox{Prob}\left\lbrace \cP_{\chi_D} (x_{k+1}) \geq \alpha \cP_{\chi_D} (x_{k}) \right\rbrace   d\mu_u \geq \int_{\mathcal{U}} (1- \beta \cP_{\chi_D} (x_{k})) d\mu_u  \label{Theo2_11} \subeqn \\
	& \int d\mu_u =1,\ \ \{\omega_i \sim \mu_{\omega_i} \}_{i=k}^{k+N_p-1}\subeqn
	\end{align}}
We first want to show that $\mathbf{P_{MPC}^*} = \mathbf{P_{\mu_u}}$. Let $\mu_u$ be a feasible solution to \eqref{Theo2_1}, i.e., {\footnotesize $\int_{\mathcal{U}}  \hbox{Prob}\left\lbrace \cP_{\chi_D} (x_{k+1}) \geq \alpha \cP_{\chi_D} (x_{k}) \right\rbrace   d\mu_u \geq 1- \beta \cP_{\chi_D} (x_{k})) $}. Then for any $u$ in support of measure $\mu_u$  {\footnotesize $\hbox{Prob}\left\lbrace \cP_{\chi_D} (x_{k+1}) \geq \alpha \cP_{\chi_D} (x_{k}) \right\rbrace  \geq 1- \beta \cP_{\chi_D} (x_{k})) $}, i.e, the feasible set of problem \eqref{Problem2}. Also, Since, {\footnotesize $\cP_{E}(u)   \le \mathbf{P_{MPC}^*}$} for all $u \in \mathcal{U}$, we have{\footnotesize  $\int_{\mathcal{U}}  \cP_{E}(u) d\mu_u  \le \mathbf{P_{MPC}^*}$}. Thus, {\footnotesize $\mathbf{P_{\mu_u}} \le \mathbf{P_{MPC}^*}$}. Conversely, let $u\in\mathcal{U}$ be a feasible solution to the problem in \eqref{Problem2}. Let $\delta_{u}$ denotes the Dirac measure at $u$. Then the $\delta_{u}$ belongs to the feasible set of problem \eqref{Theo2_1}.
The objective value of $u$ in \eqref{Problem2} is equal to $\cP_{E}(u)$. 
Moreover, $\mu_u = \delta_{u}$ is a feasible solution to the problem in \eqref{Theo2_1} with objective value equal to $\cP_{E}(u)$. This implies that $\mathbf{P_{MPC}^*} \le \mathbf{P_{\mu_u}}$. Hence, $\mathbf{P_{MPC}^*} = \mathbf{P_{\mu_u}}$, and \eqref{Theo2_1} can be rewritten as
{\small \begin{align} \label{Theo2_2}
	& \mathbf {P_{\mu_u}} :=\ \min_{\mu_u\in\cM_+(\mathcal{U})} \int_{\mathcal{U}}  \cP_{E}(u) d\mu_u  \\
	& \hbox{s.t.}\quad \nonumber \\
	& \int_{\mathcal{U}} \int_{\cK} d\mu_u d\mu \geq 1- \beta \cP_{\chi_D} (x_{k})  \label{Theo2_22} \subeqn \\
	& \int d\mu_u =1,\ \ \{\omega_i \sim \mu_{\omega_i} \}_{i=k}^{k+N_p-1}\subeqn
	\end{align}}where, set $\cK$ is defined in \eqref{K1}.
Using the Lemma \ref{preliminary result_volume}, we obtain 
\begin{align}
\mathbf{P_{measure}^*}:=&\ \min_{\mu ,\mu_u} \int  \cP_{E}(u)  d\mu_u,  \\
& \hbox{s.t.}\  \int d\mu \geq (1- \beta \cP_{\chi_D} (x_{k})) \label{Pro1} \subeqn\\
& \mu \preccurlyeq \mu_u \times \Pi_{i=k}^{k+N_p-1}\mu_{\omega_i}, \subeqn\\
&\int d\mu_u = 1, \label{Pro2} \subeqn\\
& \mu\in\cM_+(\mathcal K),\ \mu_u\in\cM_+(\cU).  \subeqn
\end{align}
Note that, if there exist delta solution $\mu_{u}^*$ for the problem \eqref{Problem3} whose mass is concentrated on a single point $u^*$, the $\int d\mu$ in constraint \eqref{Pro1} implies the probability of event $\left\lbrace \cP_{\chi_D} (x_{k+1}) \geq \alpha \cP_{\chi_D} (x_{k}) \right\rbrace$ for a control input $u^*$. 

\end{document}